\numberwithin{equation}{section}
\let\Re=\undefined\DeclareMathOperator*{\Re}{Re}
\let\Im=\undefined\DeclareMathOperator*{\Im}{Im}
\newcommand{\R}{\mathbb{R}}
\newcommand{\eps}{\varepsilon}
\newtheorem{theorem}{Theorem}[section]
\newtheorem{lemma}[theorem]{Lemma}
\newtheorem{proposition}[theorem]{Proposition}
\theoremstyle{definition}
\theoremstyle{remark}
\newcommand{\qtq}[1]{\quad\text{#1}\quad}
\begin{document}

\title[Inhomogeneous NLS]{A simple proof of scattering for the intercritical inhomogeneous NLS}

\author{Jason Murphy}
\address{Department of Mathematics \& Statistics, Missouri S\&T}
\email{jason.murphy@mst.edu}

\begin{abstract} We adapt the argument of \cite{DM} to give a simple proof of scattering below the ground state for the intercritical inhomogeneous nonlinear Schr\"odinger equation.  The decaying factor in the nonlinearity obviates the need for a radial assumption.
\end{abstract}

\maketitle

\section{Introduction}

We revisit the problem of scattering below the ground state for the focusing, intercritical, inhomogeneous nonlinear Schr\"odinger equation (NLS).  We restrict our attention to the case of a cubic nonlinearity in three dimensions, i.e.
\begin{equation}\label{nls}
(i\partial_t + \Delta) u + |x|^{-b}|u|^2 u=0,\quad (t,x)\in\R\times\R^3,
\end{equation}
with the parameter $b$ chosen from the interval $(0,\tfrac12)$.  The scaling symmetry of \eqref{nls} identifies the equation as $\dot H^{s_c}$-critical, where $s_c=\tfrac{1+b}{2}\in(\tfrac12,\tfrac34)$. We call the equation \emph{intercritical} because the critical regularity $s_c$ lies between the special values $s_c=0$ and $s_c=1$, corresponding to the mass- and energy-critical cases, respectively.  Our restriction to the cubic nonlinearity serves primarily to simplify the presentation.  In particular, the argument presented here should also apply to more general powers and dimensions $d\geq3$. The restriction on $b$, which arises from applications of Hardy's inequality, could perhaps be relaxed by modifying or refining the arguments given below.

Denoting by $Q$ the ground state solution to
\begin{equation}\label{Qeqn}
\Delta Q - Q + |x|^{-b}Q^3 = 0
\end{equation}
and the conserved mass and energy of solutions by
\[
M(u) = \int |u|^2\,dx,\quad  E(u) = \int \tfrac 12|\nabla u|^2 - \tfrac14 |x|^{-b}|u|^4\,dx,
\]
we will prove the following.

\begin{theorem}\label{T} Let $0<b<\tfrac12$.  Suppose $u_0\in H^1(\R^3)$ obeys
\begin{equation}\label{sub1}
E(u_0)^{1+b}M(u_0)^{1-b} < E(Q)^{1+b}M(Q)^{1-b}
\end{equation}
and
\begin{equation}\label{sub2}
\|\nabla u_0\|_{L^2}^{1+b}\|u_0\|_{L^2}^{1-b}< \|\nabla Q\|_{L^2}^{1+b} \|Q\|_{L^2}^{1-b}.
\end{equation}
Then the solution $u$ to \eqref{nls} with $u|_{t=0}=u_0$ is global in time and scatters, that is, there exist $u_\pm\in H^1$ such that
\[
\lim_{t\to\pm\infty}\|u(t)-e^{it\Delta}u_\pm\|_{H^1} = 0.
\]
\end{theorem}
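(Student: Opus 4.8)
The plan is to carry the three‑step scheme of \cite{DM} over to the inhomogeneous equation: (i) extract coercivity from the sub‑threshold hypotheses, (ii) prove a Tao‑type scattering criterion, and (iii) verify that criterion by a truncated virial/Morawetz estimate.

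\emph{Step 1: variational estimates.} I would first record the sharp inhomogeneous Gagliardo--Nirenberg inequality $\int|x|^{-b}|f|^4\,dx\le C_{\mathrm{GN}}\norm{\nabla f}_{L^2}^{3+b}\norm{f}_{L^2}^{1-b}$, whose optimizers are the rescalings of $Q$, together with the Pohozaev identities for \eqref{Qeqn} (which give $\norm{\nabla Q}_{L^2}^2=\tfrac{3+b}{1-b}\norm{Q}_{L^2}^2$, $\int|x|^{-b}Q^4\,dx=\tfrac{4}{1-b}\norm{Q}_{L^2}^2$, $E(Q)>0$, and identify the quantities in \eqref{sub1}--\eqref{sub2} as the sharp thresholds). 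Combining this with conservation of $M$ and $E$ and a continuity argument, \eqref{sub1}--\eqref{sub2} propagate: for every $t$ in the maximal interval of existence one has $\norm{\nabla u(t)}_{L^2}^{1+b}\norm{u(t)}_{L^2}^{1-b}\le(1-\delta)\norm{\nabla Q}_{L^2}^{1+b}\norm{Q}_{L^2}^{1-b}$ for some fixed $\delta>0$. In particular $\sup_t\norm{u(t)}_{H^1}=:A<\infty$, so $u$ is global, and this gap makes the virial functional $K(f):=\norm{\nabla f}_{L^2}^2-\tfrac{3+b}{4}\int|x|^{-b}|f|^4\,dx$ coercive, $K(u(t))\ge\delta\norm{\nabla u(t)}_{L^2}^2$; since $E(u_0)>0$ forces $\norm{\nabla u(t)}_{L^2}^2\ge 2E(u_0)$, in fact $K(u(t))\ge\kappa$ for a fixed $\kappa>0$.

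\emph{Step 2: scattering criterion.} Next I would establish: there exist $\eps_0,R_0>0$, depending only on $A$, such that if $\liminf_{t\to+\infty}\int_{|x|\le R_0}|u(t,x)|^2\,dx\le\eps_0^2$, then $u$ scatters forward in time (and symmetrically backward). This is the Duhamel/Strichartz argument of \cite{DM}: one opens the Duhamel expansion at a time where the ball‑mass is small and iterates, converting the small local mass into a small global scattering norm for $u$. The only new feature is the nonlinearity $|x|^{-b}|u|^2u$: for $|x|\ge1$ the weight is bounded and in fact improves the decay of the Duhamel term, while for $|x|\le1$ one estimates by H\"older, Sobolev embedding, and Hardy's inequality -- this is where the hypothesis $0<b<\tfrac12$ enters. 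Unlike in \cite{DM}, no radial symmetry is needed: the decaying weight already supplies the spatial decay that the radial Sobolev embedding provided there.

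\emph{Step 3: truncated virial/Morawetz.} This is the heart. Fix a radial, convex $\varphi$ with $\varphi(x)=\tfrac12|x|^2$ for $|x|\le1$ and $\nabla\varphi$ bounded, set $\varphi_R(x)=R^2\varphi(x/R)$, and put $M_R(t)=2\,\Im\int\bar u(t)\,\nabla\varphi_R\cdot\nabla u(t)\,dx$, so $|M_R(t)|\lesssim RA^2$. The Morawetz identity for \eqref{nls} gives
\begin{align*}
\frac{d}{dt}M_R(t)&=\int_{|x|\le R}\bigl(4|\nabla u|^2-(3+b)|x|^{-b}|u|^4\bigr)\,dx\\
&\quad+\int_{|x|>R}\bigl(4\,\partial_j\partial_k\varphi_R\,\Re(\partial_j\bar u\,\partial_k u)-\Delta^2\varphi_R\,|u|^2-\varphi_R''|x|^{-b}|u|^4-(2+b)|x|^{-b-1}\varphi_R'|u|^4\bigr)\,dx.
\end{align*}
Convexity makes the gradient term on $|x|>R$ nonnegative; $|\Delta^2\varphi_R|\lesssim R^{-2}$ makes the biharmonic error $O(R^{-2}A^2)$; and -- the decisive simplification -- on $|x|>R$ one has $|\varphi_R''|\lesssim1$ and $|x|^{-b-1}|\varphi_R'|\lesssim R^{-b}$, so the two quartic errors are $\lesssim R^{-b}\int_{|x|>R}|u|^4\lesssim R^{-b}A^4\to0$ as $R\to\infty$, \emph{with no radial input} (in \cite{DM} this smallness came from the radial Sobolev bound $\int_{|x|>R}|u|^4\lesssim R^{-2}A^4$). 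Hence $\frac{d}{dt}M_R(t)\ge\int_{|x|\le R}\bigl(4|\nabla u|^2-(3+b)|x|^{-b}|u|^4\bigr)\,dx-o_R(1)$, uniformly in $t$. Writing the main term as $4K(u(t))-4\int_{|x|>R}|\nabla u|^2+(3+b)\int_{|x|>R}|x|^{-b}|u|^4$ and using $K(u(t))\ge\kappa$, one integrates over a long time window, exploits $|M_R|\lesssim RA^2$, and argues by contradiction: were the hypothesis of Step~2 to fail -- a definite amount of mass remaining in $\{|x|\le R_0\}$ for all large $t$ -- this inequality would be incompatible with the uniform bounds, so the solution must in fact disperse near the origin. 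The criterion then gives scattering in both time directions.

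\emph{Main obstacle.} The delicate point is Step~3 -- making the virial argument close. The $|x|^{-b}$ factor is precisely what renders the quartic tail errors negligible without any symmetry hypothesis, which is the source of the claimed simplicity; what remains is the careful accounting in the Morawetz identity and, above all, balancing the coercive interior contribution against the exterior gradient term and the a priori bound on $M_R$ -- here the sub‑threshold gap $\delta$ produced in Step~1 is indispensable.
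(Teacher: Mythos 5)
Your overall architecture matches the paper's (sub-threshold coercivity, a Tao-type mass-evacuation scattering criterion made non-radial by the $|x|^{-b}$ weight, and a truncated virial/Morawetz estimate), and your Steps 1--2 are consistent with what the paper does, if thin on the Duhamel details (the paper extends the small-mass condition from $t=T$ to a short interval via the mass-flux identity, handles the recent past by an interpolation/Hardy argument and the distant past by the dispersive estimate). But Step 3, as written, has a genuine gap. After discarding the exterior error terms you write the interior contribution as $4K(u(t))-4\int_{|x|>R}|\nabla u|^2+(3+b)\int_{|x|>R}|x|^{-b}|u|^4$ and invoke $K(u(t))\ge\kappa$. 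The term $-4\int_{|x|>R}|\nabla u(t)|^2$ has the wrong sign and is \emph{not} $o_R(1)$ uniformly in $t$: an a priori $H^1$-bounded solution can have its kinetic energy located at arbitrarily large radii at some times, and nothing in your Steps 1--2 rules this out (ruling out exactly this kind of escape is what the argument is supposed to accomplish, so you cannot assume it). You flag this as the ``delicate point'' but offer no mechanism to absorb it, so the virial argument does not close as proposed.

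The paper's resolution is to never compare with the global functional $K(u)$ at all. One applies the coercivity to the \emph{truncated} function $\chi_R u$: the identity \eqref{benstrick} shows $\int\chi_R^2|\nabla u|^2=\int|\nabla(\chi_R u)|^2+O(R^{-2}M(u))$, and since multiplication by $\chi_R$ only decreases the mass and raises the kinetic energy by $O(R^{-2})$, the truncated function stays below the ground-state threshold uniformly in $t$ (Lemma~\ref{L:coercive}). This yields $\tfrac{d}{dt}A_a\ge\delta'\int|x|^{-b}|\chi_R u|^4\,dx-O(R^{-b})$ with no exterior kinetic term appearing, and the fundamental theorem of calculus gives the quantitative bound of Proposition~\ref{P:morawetz}, $\tfrac1T\int_0^T\int_{|x|\le R}|x|^{-b}|u|^4\,dx\,dt\lesssim\tfrac RT+R^{-b}$. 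Choosing $R\sim T^{1/(1+b)}$ and using H\"older on the ball then produces the sequence of times with small localized mass needed for Proposition~\ref{P:scatter}; this direct route also replaces your somewhat vague contradiction argument, which in any case would still need the localized coercivity to get off the ground. So the missing idea is precisely the localized (cutoff) coercivity lemma; with it inserted, your outline becomes the paper's proof.
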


Theorem~\ref{T} was established first in the radial setting by \cite{FG}, and subsequently in the non-radial setting by \cite{MMZ, CFGM}.  These works adopted the concentration-compactness approach to induction on energy pioneered in \cite{KM}, reducing the problem of scattering to the problem of precluding a global non-scattering solution that is below the ground state threshold in the sense of \eqref{sub1} and \eqref{sub2} and has precompact orbit in $H^1$. The preclusion of such a solution is achieved by using a localized virial argument.  In non-radial problems, compact solutions are typically parametrized by some moving spatial center $x(t)$.  The key to passing from the radial to the non-radial case for \eqref{nls} was the observation that the decaying factor in the nonlinearity already provides enough spatial localization to guarantee that $x(t)\equiv 0$, which in turn allows for a simple implementation of the virial argument.  The basic idea is that if $|x(t)|\to\infty$, then the solution would behave like an approximate solution to the \emph{linear} Schr\"odinger equation, contradicting the fact that it does not scatter.  Roughly speaking, the non-radial problem may be treated as if it were radial.

In this note we push this idea a bit further by showing that the argument of \cite{DM}, which gives a simple proof of scattering for the radial NLS, may be adapted to \eqref{nls} even in the non-radial case.\footnote{The paper \cite{XZ} similarly adapted the arguments of the related work \cite{ADM} to the $2d$ inhomogeneous NLS. However, the authors of \cite{XZ} continued to work in the radial setting.}  The argument of \cite{DM} has two ingredients: (i) a scattering criterion as in \cite{Tao} based on a `mass evacuation' condition, and (ii) a hybrid virial/Morawetz estimate as in \cite{TO}, which implies the mass evacuation condition for solutions below the ground state threshold.  The radial assumption is used in both steps to derive quantitative decay estimates at large radii via the radial Sobolev embedding estimate of \cite{Strauss}.  In both cases, however, this estimate is used only in controlling terms arising from the nonlinearity.  Observing that the decaying factor in the nonlinearity of \eqref{nls} already yields quantitative decay at large radii, we find that the simple argument of \cite{DM} suffices to treat \eqref{nls}, even in the non-radial case.

Without loss of generality, we consider scattering in the forward time direction only.  After collecting a few preliminaries in Section~\ref{S:prelim}, we will prove the scattering criterion in Section~\ref{S:scatter} and the virial/Morawetz estimate in Section~\ref{S:virial}.  These two ingredients quickly imply Theorem~\ref{T}.

\subsection{Preliminaries}\label{S:prelim} We will need a few results related to well-posedness and scattering for \eqref{nls}.  We assume familiarity with the standard subcritical well-posedness theory for dispersive PDEs (e.g. the Duhamel formulation, Strichartz estimates, etc.).  Otherwise, we refer the reader to \cite{Cazenave} for a textbook treatment of nonlinear Schr\"odinger equations in general and to \cite{Guzman} for the specific case of the inhomogeneous NLS. 

For any initial datum $u_0\in H^1$, there exists a unique maximal-lifespan solution to \eqref{nls}.  Solutions conserve the mass and energy, and any solution that remains uniformly bounded in $H^1$ throughout its lifespan may be extended globally in time.  For such solutions we have the following local estimate:
\begin{equation}\label{holder}
\|u\|_{L_t^q H_x^{1,r}(I\times\R^3)} \lesssim (1+|I|)^{\frac1q} \qtq{for any Strichartz admissible pair}(q,r).
\end{equation}

We will also need the following small-data scattering result.
\begin{lemma}\label{small-data} Let $b\in(0,\tfrac12)$.  Suppose $u$ is a forward global solution to \eqref{nls} with $u|_{t=0}=u_0\in H^1$.  Suppose further that 
\[
\|u\|_{L_t^\infty H_x^1((0,\infty)\times\R^3)}= E\qtq{and} \|e^{it\Delta}u_0\|_{L_t^4 L_x^{\frac{6}{1-b}}((0,\infty)\times\R^3)}=  \eps.
\]
If $\eps$ is sufficiently small depending on $E$, then $u$ scatters in $H^1$ as $t\to\infty$. 
\end{lemma}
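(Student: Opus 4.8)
The plan is to run a standard small-data continuity argument at the scaling-critical regularity $\dot H^{s_c}$. First I would reduce the claim to the assertion that the scaling-critical scattering norm $\|u\|_{L_t^4 L_x^{6/(1-b)}((0,\infty)\times\R^3)}$ is small: this norm is $\dot H^{s_c}$-admissible (one checks $\tfrac24+\tfrac{3(1-b)}{6}=\tfrac32-s_c$), and once it is controlled the Duhamel formula, Strichartz estimates, and the a priori bound $\|u\|_{L_t^\infty H_x^1}=E$ upgrade control to every standard Strichartz norm on $(0,\infty)$ --- by partitioning $(0,\infty)$ into finitely many subintervals on which the scattering norm is tiny and running a short-interval argument on each, with \eqref{holder} supplying the input --- after which $\{e^{-it\Delta}u(t)\}_{t>0}$ is seen to be Cauchy in $H^1$ and produces $u_+$. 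To obtain the smallness, set $\eta(T)=\|u\|_{L_t^4 L_x^{6/(1-b)}((0,T)\times\R^3)}$, which is continuous in $T$ with $\eta(0)=0$. Applying Strichartz/Kato estimates for the $\dot H^{s_c}$-admissible pair $(4,\tfrac{6}{1-b})$ --- equivalently, first passing to the admissible pair $(4,3)$ via the Sobolev embedding $\dot W^{s_c,3}(\R^3)\hookrightarrow L^{6/(1-b)}(\R^3)$ --- to the Duhamel formula yields
\[
\eta(T)\lesssim \eps + \bigl\||x|^{-b}|u|^2u\bigr\|_{N((0,T)\times\R^3)},
\]
with $N$ the relevant dual space.

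The heart of the matter is the nonlinear estimate
\[
\bigl\||x|^{-b}|u|^2u\bigr\|_{N((0,T)\times\R^3)}\lesssim \eta(T)^2\,\|u\|_{L_t^\infty H_x^1((0,T)\times\R^3)},
\]
which I would prove by splitting $\R^3=\{|x|\le1\}\cup\{|x|>1\}$. On the exterior region $|x|^{-b}\le1$, so the estimate reduces to the familiar three-dimensional cubic one: H\"older in space and time places two factors of $u$ in the scattering norm and one in $L_t^\infty L_x^p$, combined with $H^1\hookrightarrow L^p$. On the interior region one uses that $|x|^{-b}\in L^\gamma(\{|x|\le1\})$ for every $\gamma<\tfrac3b$; since $b<\tfrac12$ this range is wide enough to absorb the weight by H\"older while leaving sufficient integrability for the three factors of $u$, again two of them in the scattering norm and one in $L_t^\infty H_x^1$. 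When a derivative must be carried through the nonlinearity --- as it must in order to recover the $H^1$-Strichartz bounds used in the reduction above, and hence scattering in $H^1$ --- the Leibniz rule produces a term in which the derivative falls on the weight, of size $|x|^{-b-1}$; writing $|x|^{-b-1}|u|^2u=|x|^{-b}|u|^2\cdot(|x|^{-1}u)$ one controls the singular factor by Hardy's inequality $\||x|^{-1}f\|_{L^2}\lesssim\|\nabla f\|_{L^2}$, and it is here, through the integrability of $|x|^{-b-1}$ near the origin, that the restriction $b<\tfrac12$ enters.

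Combining the two displays gives $\eta(T)\lesssim\eps+E\,\eta(T)^2$ uniformly in $T$; since $\eta$ is continuous with $\eta(0)=0$, a standard bootstrap shows that for $\eps$ small depending on $E$ one has $\eta(T)\lesssim\eps$ for all $T>0$, hence $\|u\|_{L_t^4 L_x^{6/(1-b)}((0,\infty)\times\R^3)}\lesssim\eps$, and scattering in $H^1$ follows from the first paragraph. I expect the main obstacle to be the nonlinear estimate, and in particular the interaction near the origin between the singular weight $|x|^{-b}$ and the derivative that must be carried through the cubic term; the hypothesis $b<\tfrac12$ is exactly what makes this tractable via Hardy's inequality.
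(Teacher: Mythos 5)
The gap sits exactly where you put the weight of the argument: the estimate $\||x|^{-b}|u|^2u\|_{N}\lesssim\eta(T)^2\|u\|_{L_t^\infty H_x^1}$ with $N$ left as ``the relevant dual space''. For this to feed back into $\eta(T)$ you need an inhomogeneous Strichartz estimate mapping $N$ into $L_t^4L_x^{6/(1-b)}$, and your right-hand side forces what $N$ must be: H\"older in time (two factors in $L_t^4$, one in $L_t^\infty$) forces $L_t^2$, and scale invariance then forces $L_x^{6/(4-b)}$. The corresponding pair $(\tilde q,\tilde r)=(2,\tfrac{6}{2+b})$ is the endpoint $\tilde q=2$ of the $\dot H^{-s_c}$-admissible family and violates the acceptability condition $\tfrac1{\tilde q}<3(\tfrac12-\tfrac1{\tilde r})$ needed for the Foschi/Vilela (``Kato'') estimates, since $3(\tfrac12-\tfrac{2+b}{6})=\tfrac{1-b}{2}<\tfrac12$; this endpoint is excluded in the INLS literature for precisely this reason, so the estimate cannot simply be cited. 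If instead you take $N$ to be an ordinary dual admissible space, Strichartz returns only admissible norms of the Duhamel term, which do not control $L_t^4L_x^{6/(1-b)}$ without $s_c$ derivatives; and your ``equivalent'' route through $(4,3)$ and $\dot W^{s_c,3}\hookrightarrow L^{6/(1-b)}$ is not equivalent, because it puts $|\nabla|^{s_c}$ on the weighted nonlinearity, destroying the $\eta^2 E$ structure and requiring a fractional Leibniz rule against the singular weight that you never address. A related borderline problem recurs in your $H^1$-upgrade step: once a whole derivative sits on one factor (available only in $L_t^\infty L_x^2$ or admissible Strichartz norms), keeping the other two factors at exactly $L_x^{6/(1-b)}$ forces the weight into exactly $L^{3/b}$ near the origin, which $|x|^{-b}$ just fails to satisfy (it is only weak $L^{3/b}$); your claim that any $\gamma<3/b$ suffices overlooks that the exponent arithmetic lands on $3/b$ itself. (Also, \eqref{holder} cannot supply the input on the final, infinite subinterval; there the argument must run on smallness alone.)

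This is exactly the difficulty the paper's proof is built to sidestep: instead of bootstrapping the exact critical norm, it interpolates the hypothesis $\|e^{it\Delta}u_0\|_{L_t^4L_x^{6/(1-b)}}=\eps$ against ordinary Strichartz bounds to obtain smallness $\eps^c$ in strictly subcritical norms $L_t^4L_x^r$ with $r\in(6,\tfrac{6}{1-b})$ and $L_t^4L_x^{12}$, and then closes one bootstrap at the $H^1$ level using only admissible Strichartz (dual space $L_t^2H_x^{1,6/5}$), H\"older with the weight split as $\||x|^{-b}\|_{L^\rho(|x|>1)}+\||x|^{-b}\|_{L^6(|x|\leq1)}$ (this is where $b<\tfrac12$ enters), and Hardy for the term where the derivative hits the weight. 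Your scheme can be repaired along the lines of Farah--Guzm\'an by enlarging the bootstrap to include derivative Strichartz norms and choosing Kato pairs strictly inside the valid range, but as written the central estimate is unjustified, and the paper's interpolation trick is the cheaper fix.
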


\begin{proof}[Sketch of proof] We choose a parameter $\rho\in(\tfrac{3}{b},\infty)$ and set $r=\tfrac{6\rho}{\rho-3}\in(6,\tfrac{6}{1-b})$.  We then define  
\[
\|u\|_{S} = \|u\|_{L_t^4 L_x^{r}}+\|u\|_{L_t^4 L_x^{12}}, 
\]
where here and below all space-time norms are taken over $(0,\infty)\times\R^3$.  By interpolation, Sobolev embedding, and Strichartz estimates, we can deduce that 
\[
\|e^{it\Delta}u_0\|_S \lesssim_E \eps^c
\]
for some $c>0$.  By Sobolev embedding, Strichartz estimates, H\"older's inequality, and Hardy's inequality, we can then estimate
\begin{align*}
\|u\|_S & \lesssim \eps^c + \| |x|^{-b}|u|^2 u\|_{L_t^2 H_x^{1,\frac65}} \\
& \lesssim \eps^c + \sum_{T\in\{1,\nabla,|x|^{-1}\}}\| |x|^{-b}u^2\,Tu\|_{L_t^2 L_x^{\frac65}} \\
& \lesssim \eps^c + \bigl\{ \| |x|^{-b}\|_{L^{\rho}(|x|>1)}+\| |x|^{-b}\|_{L^6(|x|\leq 1)} \bigr\}\|u\|_S^2\| u\|_{L_t^\infty H_x^1} \\
& \lesssim \eps^c + \|u\|_S^2 \|u\|_{L_t^\infty H_x^1}. 
\end{align*}
Thus for $\eps=\eps(E)$ sufficiently small, we derive $\|u\|_S\lesssim \eps^c$.  With this bound in hand, we may deduce that $e^{-it\Delta}u(t)$ is Cauchy in $H^1$ as $t\to\infty$ essentially by repeating the estimates above. \end{proof}

Next, we recall some properties of the ground state $Q$.  For more details, we refer the reader to  \cite[Theorems~1.1~and~1.2]{Farah}. 

The ground state $Q$ arises as an optimizer for the Gagliardo--Nirenberg inequality
\begin{equation}\label{GN}
\| |x|^{-b} |u|^4 \|_{L^1} \leq C_b \|u\|_{L^2}^{1-b} \|\nabla u\|_{L^2}^{3+b}.
\end{equation}
Using Pohozaev identities (obtained by multiplying \eqref{Qeqn} by $Q$ and $x\cdot\nabla Q$ and integrating by parts), one can connect the sharp constant to norms of $Q$ as follows:
\begin{equation}\label{Cb}
\|\nabla Q\|_{L^2}^{1+b} \|Q\|_{L^2}^{1-b} = \tfrac4{3+b} C_b^{-1} \qtq{and} E(Q)^{1+b}M(Q)^{1-b} = \tfrac{16}{(3+b)^{3+b}}(\tfrac{1+b}{2})^{1+b}C_b^{-2}. 
\end{equation}
Then, using \eqref{GN}, one can show that solutions obeying \eqref{sub1} and \eqref{sub2} are global in time and uniformly bounded in $H^1$, with
\begin{equation}\label{stay-below}
\sup_{t\in\R} \bigl\{\|\nabla u(t)\|_{L^2}^{1+b}\|u(t)\|_{L^2}^{1-b}\bigr\}<(1-\delta)\|\nabla Q\|_{L^2}^{1+b}\|Q\|_{L^2}^{1-b}\qtq{for some}\delta>0.
\end{equation}

The proof of scattering is then connected to the following virial identity:
\begin{equation}\label{pure-virial}
\tfrac{d}{dt}4\Im \int \bar u \nabla u\cdot x\,dx =  8\int |\nabla u|^2 - \tfrac{3+b}4|x|^{-b}|u|^4\,dx,
\end{equation}
which follows from \eqref{nls} and integration by parts. In particular, the bound \eqref{stay-below} and the sharp Gagliardo--Nirenberg inequality imply that the right-hand side of \eqref{pure-virial} is bounded below, yielding the monotonicity at the heart of scattering.  In practice, the presence of the weight $x$ in \eqref{pure-virial} necessitates spatial localization of the above identity, and accordingly we will need the following local form of coercivity.

\begin{lemma}\label{L:coercive} Let $u_0\in H^1$ satisfy the hypotheses of Theorem~\ref{T}, and let $u$ be the corresponding solution to \eqref{nls}.  There exists $\delta'>0$ so that for all $R$ sufficiently large,
\[
\int |\nabla[\chi_R u(t,x)]|^2 - \tfrac{3+b}4 |x|^{-b}|\chi_R u(t,x)|^4\,dx \geq \delta'\int |x|^{-b}|\chi_R u(t,x)|^4\,dx
\]
uniformly over $t\in\R$, where $\chi_R$ is a smooth cutoff to $|x|\leq R$. 
\end{lemma}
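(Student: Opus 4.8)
The plan is to reduce the localized coercivity to the sharp Gagliardo--Nirenberg inequality \eqref{GN}, applied to $v=\chi_R u(t)$ rather than to $u$ itself, after verifying that this truncated function still lies strictly below the ground-state threshold.

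\textbf{Unlocalized coercivity with a margin.} First I would record the following elementary consequence of \eqref{GN} and \eqref{Cb}: if $v\in H^1$ satisfies $\|\nabla v\|_{L^2}^{1+b}\|v\|_{L^2}^{1-b}\leq(1-\eta)\|\nabla Q\|_{L^2}^{1+b}\|Q\|_{L^2}^{1-b}$ for some $\eta\in(0,1)$, then, writing $3+b=(1+b)+2$ and using \eqref{GN} followed by the first identity in \eqref{Cb},
\begin{align*}
\tfrac{3+b}{4}\int|x|^{-b}|v|^4\,dx &\leq \tfrac{3+b}{4}C_b\bigl(\|v\|_{L^2}^{1-b}\|\nabla v\|_{L^2}^{1+b}\bigr)\|\nabla v\|_{L^2}^2 \\
&\leq (1-\eta)\|\nabla v\|_{L^2}^2.
\end{align*}
Hence $\int|\nabla v|^2-\tfrac{3+b}{4}|x|^{-b}|v|^4\,dx\geq\eta\|\nabla v\|_{L^2}^2\geq\tfrac{\eta}{1-\eta}\cdot\tfrac{3+b}{4}\int|x|^{-b}|v|^4\,dx$, an inequality of exactly the desired form with constant depending only on $\eta$ and $b$.

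\textbf{The truncation stays below threshold.} Since $\nabla(\chi_R u)=\chi_R\nabla u+u\,\nabla\chi_R$ and $\|\nabla\chi_R\|_{L^\infty}\lesssim R^{-1}$, one has $\|\nabla(\chi_R u(t))\|_{L^2}^2\leq\|\nabla u(t)\|_{L^2}^2+CR^{-1}\|\nabla u(t)\|_{L^2}\|u(t)\|_{L^2}+CR^{-2}\|u(t)\|_{L^2}^2$, together with the trivial bound $\|\chi_R u(t)\|_{L^2}\leq\|u(t)\|_{L^2}$. By conservation of mass and the uniform-in-time gradient bound furnished by \eqref{stay-below}, the error terms here are $O(R^{-1})$ uniformly in $t$. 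Combining this with \eqref{stay-below} itself, for every $R$ sufficiently large we obtain
\[
\|\nabla(\chi_R u(t))\|_{L^2}^{1+b}\|\chi_R u(t)\|_{L^2}^{1-b}\leq\bigl(1-\tfrac{\delta}{2}\bigr)\|\nabla Q\|_{L^2}^{1+b}\|Q\|_{L^2}^{1-b}\qquad\text{for all }t\in\R.
\]

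\textbf{Conclusion.} Applying the first step with $v=\chi_R u(t)$ and $\eta=\delta/2$ then yields the lemma, with $\delta'=\tfrac{\delta/2}{1-\delta/2}\cdot\tfrac{3+b}{4}$ (or any smaller positive number), uniformly over $t\in\R$ and over all sufficiently large $R$. The only point demanding any care --- and hence the main, if mild, obstacle --- is the second step: one must invoke \emph{both} mass conservation and the uniform-in-time $H^1$ bound \eqref{stay-below} to be sure the cutoff commutator contributes only an $O(R^{-1})$ perturbation, so that the fixed gap $\delta$ in \eqref{stay-below} survives the truncation. The remainder is immediate from the sharp Gagliardo--Nirenberg inequality and the Pohozaev relations \eqref{Cb}.
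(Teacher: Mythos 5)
Your proposal is correct and follows essentially the same two-step route as the paper: coercivity with a margin from the sharp Gagliardo--Nirenberg inequality \eqref{GN} together with \eqref{Cb}, then verification that $\chi_R u(t)$ stays uniformly below the threshold so that the margin $\tfrac{\delta}{2}$ survives the truncation. The only cosmetic difference is that you compare $\|\nabla(\chi_R u)\|_{L^2}$ to $\|\nabla u\|_{L^2}$ via the product rule, giving an $O(R^{-1})$ error, whereas the paper uses the identity \eqref{benstrick} to get $O(R^{-2})$; either error vanishes as $R\to\infty$, so the conclusion is unaffected.
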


\begin{proof} (i) First suppose $\|\nabla f\|_{L^2}^{1+b}\|f\|_{L^2}^{1-b}<(1-\eta)\|\nabla Q\|_{L^2}^{1+b}\|Q\|_{L^2}^{1-b}$ for some $f\in H^1$ and $\eta\in(0,1)$.  Then  \eqref{GN} and \eqref{Cb} imply
\begin{align*}
\|f\|_{\dot H^1}^2 - \tfrac{3+b}{4}\||x|^{-1}f^4\|_{L^1} \geq \eta \|f\|_{\dot H^1}^2,
\end{align*}
which yields the following upon rearranging:
\[
\|f\|_{\dot H^1}^2 - \tfrac{3+b}{4}\||x|^{-b}f^4\|_{L^1} \geq \tfrac{\eta}{1-\eta}{\tfrac{3+b}{4}} \| |x|^{-b}f^4\|_{L^1}.
\]
(ii) Using (i), it suffices to to show that 
\begin{equation}\label{stay-below-local}
\sup_{t\in\R}\bigl\{\|\nabla[\chi_R u(t)]\|_{L^2}^{1+b} \|\chi_R u(t)\|_{L^2}^{1-b}\bigr\} < (1-\eta)\|\nabla Q\|_{L^2}^{1+b}\|Q\|_{L^2}^{1-b}
\end{equation}
for $R$ sufficiently large and some $\eta>0$.  As \eqref{stay-below} holds and  multiplication by $\chi_R$ only decreases the $L^2$-norm, it suffices to consider the $\dot H^1$-norm.  For this, we use 
\begin{equation}\label{benstrick}
\int \chi_R^2 |\nabla u|^2\,dx = \int |\nabla[\chi_R u]|^2 + \chi_R \Delta(\chi_R) |u|^2\,dx,
\end{equation}
which implies
\[
\| \nabla[\chi_R u]\|_{L^2}^2 \leq \|\nabla u\|_{L^2}^2 + \mathcal{O}(R^{-2} M(u)).
\]
We conclude that \eqref{stay-below-local} holds with $\eta=\tfrac12\delta$ for all $R$ sufficiently large. 
\end{proof}

\section{Scattering criterion}\label{S:scatter}

The first ingredient for the proof of Theorem~\ref{T} is the following scattering criterion as in \cite{Tao}.  For the standard NLS, this criterion is only valid in the radial setting.  As we will see, because of the decaying factor in the nonlinearity, this criterion is sufficient for \eqref{nls} even in the non-radial setting. 

\begin{proposition}\label{P:scatter} Let $b\in(0,\tfrac12)$.  Suppose $u$ is a global solution to \eqref{nls} obeying $\|u\|_{L_t^\infty H_x^1}\leq E$.  Then  there exist $\eps=\eps(E)>0$ and $R=R(E)>0$ so that if
\[
\liminf_{t\to\infty}\int_{|x|\leq R}|u(t,x)|^2\,dx \leq \eps^2,
\]
then $u$ scatters forward in time. 
\end{proposition}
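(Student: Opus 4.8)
The plan is to follow the strategy of \cite{Tao}, adapted so that the decaying weight $|x|^{-b}$ plays the role that the radial Sobolev embedding plays in the standard NLS. Fix $E$ and suppose the mass evacuation condition holds; then there is a sequence $t_n\to\infty$ with $\int_{|x|\le R}|u(t_n,x)|^2\,dx\le \eps^2$ for a radius $R=R(E)$ to be chosen. By the small-data scattering criterion in Lemma~\ref{small-data}, it suffices to show $\|e^{it\Delta}u(t_n)\|_{L_t^4 L_x^{6/(1-b)}((0,\infty)\times\R^3)}\lesssim \eps^c$ for some $c>0$ and $n$ large. I would write $e^{it\Delta}u(t_n)$ using the Duhamel formula centered at $t_n$, which splits the evolution into a free piece $e^{i(t+t_n)\Delta}u(t_n)$ coming from a ``distant past'' Duhamel representation and a nonlinear Duhamel piece over an interval $[t_n-T, t_n]$ of bounded length $T=T(E)$, plus tail terms.

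The key steps are as follows. First, control the contribution of the linear evolution of $u(t_n)$ restricted to large spatial scales: for $|x|\ge R$, the weight satisfies $|x|^{-b}\le R^{-b}$, so interpolating between the energy bound and $L^2$ decay and using dispersive/Strichartz estimates, the free flow of the high-$|x|$ part of $u(t_n)$ is $O_E(R^{-\theta})$ in the relevant Strichartz norm for some $\theta>0$; choosing $R=R(E)$ large makes this $\lesssim \eps$. Second, control the free evolution of the low-$|x|$ part $\chi_R u(t_n)$: this has small mass by hypothesis, and by interpolating its $L^2$ smallness against its bounded $\dot H^1$ norm and feeding this into Strichartz we again get a bound $\lesssim \eps^c$. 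Third, estimate the Duhamel integral over $[t_n-T,t_n]$: here the nonlinearity is $|x|^{-b}|u|^2u$, and the whole point is that $\||x|^{-b}\|_{L^\rho(|x|>1)}+\||x|^{-b}\|_{L^6(|x|\le 1)}<\infty$ for suitable $\rho$ (exactly as in the proof of Lemma~\ref{small-data}), so by H\"older, Hardy, and Sobolev embedding this term is bounded by $T^{1/2}$ times a power of $\|u\|_{L_t^\infty H_x^1}\le E$; since $T=T(E)$ is fixed, this is an $O_E(1)$ quantity, which looks useless until one observes that the relevant space-time norm is taken over $(0,\infty)$ and one exploits that the free evolution of the Duhamel tail from times before $t_n-T$ is small because $t_n\to\infty$ — i.e.\ by choosing $T$ large depending on $E$ (not on $\eps$) and then $n$ large, the ``old'' nonlinear interactions decay. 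Concretely I would bound $\|e^{it\Delta}(u(t_n)-e^{-iT\Delta}u(t_n-T))\|$ by a truncated Duhamel integral and use the finiteness of the global Strichartz norm of $u$ on $[t_n-T,t_n]$ together with the fact that shifting the time origin sends the tail to zero.

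Assembling these, one gets $\|e^{it\Delta}u(t_n)\|_{L_t^4 L_x^{6/(1-b)}}\lesssim_E \eps + \eps^c + (\text{term}\to 0\text{ as }n\to\infty)$, so for $n$ large the hypothesis of Lemma~\ref{small-data} is met with parameter $\lesssim_E \eps^c$, and we conclude $u$ scatters forward in time. The main obstacle, and the place where the weight is doing real work, is the third step: in the radial NLS treatment one must localize the Duhamel term to $|x|\gtrsim$ something and use Strauss's estimate to gain decay, whereas here the weight $|x|^{-b}$ supplies integrability near infinity for free and the only care needed is to split $|x|\le 1$ versus $|x|>1$ and apply Hardy's inequality (hence the restriction $b<\tfrac12$) to absorb the derivative landing on the singular weight. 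A secondary technical point is making the choice of parameters non-circular: $R$ and $T$ must be chosen depending only on $E$, and only then is $n$ (equivalently $t_n$) sent to infinity, so that the ``old interactions'' term can be made smaller than $\eps$; I would be careful to fix this order of quantifiers explicitly.
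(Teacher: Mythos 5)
There is a genuine gap, and it sits exactly where the weight cannot help you. In your first step you decompose $u(t_n)$ spatially and claim that the free evolution of the exterior piece $(1-\chi_R)u(t_n)$ is $O_E(R^{-\theta})$ in $L_t^4L_x^{6/(1-b)}$ ``because $|x|^{-b}\le R^{-b}$ for $|x|\ge R$.'' This is false in the non-radial setting: the norm $L_t^4L_x^{6/(1-b)}$ of the \emph{linear} flow contains no weight, and the free propagator is translation invariant, so a fixed $H^1$ bump centered at $|x|\sim 2R$ (perfectly consistent with $\|u\|_{L_t^\infty H_x^1}\le E$ and with the mass-evacuation hypothesis, since its mass in $\{|x|\le R\}$ is zero) has a Strichartz norm independent of $R$. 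The decaying factor only produces decay where it actually appears, namely inside the nonlinearity; this is precisely why the paper never splits the linear part spatially. Instead one writes $e^{i(t-T)\Delta}u(T)=e^{it\Delta}u_0+i\int_0^T e^{i(t-s)\Delta}\bigl[|x|^{-b}|u|^2u\bigr](s)\,ds$, so the linear term is the free flow of the \emph{fixed} datum $u_0$, whose $L_t^4L_x^{6/(1-b)}([T,\infty))$ tail is small for $T$ large by monotone convergence (finiteness follows from Sobolev embedding and Strichartz with the admissible pair $(4,3)$, since $s_c=\tfrac{1+b}{2}<1$).

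The nonlinear pieces are not closed either. With a recent-past window of fixed length $T=T(E)$ you bound the corresponding Duhamel term by $O_E(T^{1/2})$, which is never small, and your proposed rescue --- that the contribution of $[0,t_n-T]$ tends to zero as $t_n\to\infty$ --- is unjustified: without a global spacetime bound on $u$ (which is what is being proved) there is no reason the old Duhamel integral, evolved to times beyond $t_n$, becomes small in the limit. In the paper the recent past is $I_1=[T-\eps^{-c},T]$, of length growing as $\eps\to0$, and the smallness there comes from the hypothesis itself: the small mass in $\{|x|\le R\}$ at time $T$ is propagated over all of $I_1$ via $\bigl|\tfrac{d}{dt}\int\chi_R|u|^2\,dx\bigr|\lesssim R^{-1}$ after taking $R\ge\eps^{-2-c}$, and is combined with $|x|^{-b}\lesssim R^{-b}$ on $\{|x|>R\}$ to give $\||x|^{-b}u\|_{L_x^r}\lesssim\eps^{\theta}$ on $I_1$ (Hardy, hence $b<\tfrac12$), which beats the loss $|I_1|=\eps^{-c}$. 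The distant past $[0,T-\eps^{-c}]$ is then handled quantitatively, not by sending $n\to\infty$: the dispersive estimate plus Hardy and Sobolev give a bound $(t-T+\eps^{-c})^{-1/2}$, hence $\eps^{c/4}$ after the $L_t^4([T,\infty))$ norm --- smallness that comes precisely from the separation $\eps^{-c}$, which is why the window length cannot be a fixed $T(E)$. In short, your outline never uses the mass-evacuation hypothesis in the nonlinear Duhamel estimate, which is where it is actually needed; the order of quantifiers you worry about at the end is resolved in the paper by letting $R$ depend polynomially on $\eps$ and then fixing $\eps=\eps(E)$, so that $R=R(E)$ as required.
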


\begin{proof} Throughout the proof, we allow implicit constants to depend on $E$.  With $\eps>0$ and $R>1$ to be chosen below, we first take $T>\eps^{-1}$ large enough that
\begin{equation}\label{smallatT}
\|e^{it\Delta} u_0\|_{L_t^4 L_x^{\frac{6}{1-b}}([T,\infty)\times\R^3)} < \eps \qtq{and}\int \chi_R(x)|u(T,x)|^2\,dx \leq \eps^2,
\end{equation}
where $\chi_R$ is a smooth cutoff to $\{|x|\leq R\}$.  The goal is then to prove
\[
\|e^{i(t-T)\Delta}u(T)\|_{L_t^4 L_x^{\frac{6}{1-b}}([T,\infty)\times\R^3)} < \eps^a \qtq{for some}a>0,
\]
which (for $\eps$ sufficiently small) implies scattering via Lemma~\ref{small-data}.  To estimate this norm, we rewrite the Duhamel formula for $u$ as follows:
\begin{align}
e^{i(t-T)\Delta}u(T) & = e^{it\Delta}u_0  +  i\int_{I_1} e^{i(t-s)\Delta}|x|^{-b}|u|^2 u(s)\,ds \label{D1}\\
& \quad + i\int_{I_2} e^{i(t-s)\Delta}|x|^{-b}|u|^2 u(s)\,ds, \label{D2} 
\end{align}
where $I_1=[T-\eps^{-c},T]$ and $I_2=[0,T-\eps^{-c}]$ for some $c>0$ to be specified below.

The linear term in \eqref{D1} is controlled in acceptable manner by \eqref{smallatT}, and hence it suffices to estimate the two integral terms. 

For the integral term in \eqref{D1}, we begin by using the triangle inequality, Sobolev embedding, and Strichartz estimates to obtain 
\begin{align}
\biggl\|\int_{I_1} & e^{i(t-s)\Delta}|x|^{-b}|u|^2 u(s)\,ds\biggr\|_{L_t^4 L_x^{\frac{6}{1-b}}([T,\infty)\times\R^3)}  \lesssim  \| |\nabla|^{\frac{1+b}{2}}\bigl[|x|^{-b}|u|^2 u\bigr]\|_{L_t^1 L_x^2(I_1\times\R^3)}.\label{D11}
\end{align}
We estimate this term by interpolating between $L_x^2$ and $\dot H_x^1$.  

We first consider the estimate in $L_x^2$.
 We begin by extending the small mass condition at $t=T$ in \eqref{smallatT} to the interval $I_1$. Using \eqref{nls} to derive the identity
 \[
 \partial_t |u|^2 = -2\nabla\cdot\Im(\bar u \nabla u),
 \]
we integrate by parts and use Cauchy--Schwarz to estimate
\[
\biggl| \tfrac{d}{dt} \int \chi_R(x)|u(t,x)|^2\,dx \biggr| \lesssim R^{-1}.
\]
With $R\geq\eps^{-2-c}$, this implies 
\[
\|\chi_R u \|_{L_t^\infty L_x^2(I_1\times\R^3)} \lesssim \eps.
\] 
Recalling that $b<\tfrac12$, we now choose an exponent $r=r(b)$ satisfying
\begin{equation}\label{r-conditions}
3<r<\tfrac{6}{1+2b}
\end{equation}
and $\theta=\theta(b)\in(0,1)$ satisfying
\begin{equation}\label{theta-conditions}
\theta<\min\{2(\tfrac1r-\tfrac{b}{3}),\tfrac{3}{r}-b-\tfrac12\}. 
\end{equation}
Writing $r_\theta$ for the solution to $\tfrac{1}{r}=\tfrac{\theta}{2}+\tfrac{1-\theta}{r_\theta}$, we use the triangle inequality, H\"older's inequality, Hardy's inequality, and Sobolev embedding to estimate
\begin{align*}
\| |x|^{-b}u\|_{L_x^r} & \lesssim \| |x|^{-b}(1-\chi_R)u\|_{L_x^r} + \|\chi_R u\|_{L_x^2}^{\theta}\| |x|^{-\frac{b}{1-\theta}} u\|_{L_x^{r_\theta}}^{1-\theta} \\
& \lesssim R^{-b} + \eps^{\theta} \| |\nabla|^{\frac{b}{1-\theta}+\frac{3}{2}-\frac{3}{r_\theta}}u\|_{L_x^2}^{1-\theta}
 \lesssim \eps^{\theta}
\end{align*}
uniformly over $t\in I_1$, where we have further imposed $R\geq\eps^{-\frac{\theta}{b}}$.  Here \eqref{r-conditions} and first constraint in \eqref{theta-conditions} guarantee that we may apply Hardy's inequality, while the second constraint in \eqref{theta-conditions} guarantees that the final norm is controlled by $H^1$.  Using H\"older's inequality, Sobolev embedding, and the local estimate \eqref{holder}, we therefore obtain
\begin{equation}\label{L2estimate}
 \begin{aligned}
\|  |x|^{-b} |u|^2 u\|_{L_t^1 L_x^2(I_1\times\R^3)}  &\lesssim \|x|^{-b}u\|_{L_t^\infty L_x^r(I_1\times\R^3)} \|u\|_{L_t^2 L_x^{\frac{3r}{r-3}}(I_1\times\R^3)}\|u\|_{L_t^2 L_x^6(I_1\times\R^3)}  \\
& \lesssim \eps^{\theta}|I_1| \lesssim \eps^{\theta-c}.
\end{aligned}
\end{equation}

We turn to the $\dot H_x^1$ estimate.  This leads to two terms, which take the form
\[
|x|^{-b}\mathcal{O}(u^2\nabla u) \qtq{and} \mathcal{O}(|x|^{-b-1}u^3). 
\] 
The first term may be estimated exactly as above; we simply put $\nabla u$ in $L_t^2 L_x^6$ instead of $u$ in \eqref{L2estimate}. For the second term, we instead use H\"older's inequality, Hardy's inequality, Sobolev embedding, and \eqref{holder} to estimate
\begin{align*}
\| |x|^{-b-1}u^3 \|_{L_t^1 L_x^2(I_1\times\R^3)} &\lesssim  \| |\nabla|^{\frac{b+1}{3}} u\|_{L_t^3 L_x^6(I_1\times\R^3)}^3 \\
& \lesssim \| |\nabla|^{\frac{b+2}{3}} u\|_{L_t^3 L_x^{\frac{18}{5}}(I_1\times\R^3)}^3 \lesssim \eps^{-c}.
\end{align*}
Here the application of Hardy's inequality requires $b<\tfrac12$. 

Returning to \eqref{D11}, we obtain the following bound by interpolation:
\[
\biggl\|\int_{I_1}  e^{i(t-s)\Delta}|x|^{-b}|u|^2 u(s)\,ds\biggr\|_{L_t^4 L_x^{\frac{6}{1-b}}([T,\infty)\times\R^3)}  \lesssim \eps^{\frac{(1-b)\theta}{2}-c},
\]
which (choosing $c=c(b)$ sufficiently small) is acceptable. 

It remains to estimate \eqref{D2} in $L_t^4 L_x^{\frac{6}{1-b}}$ on $[T,\infty)\times\R^3$.  Here the estimate is the same as in \cite{DM}.  We interpolate between the $L_t^4 L_x^3$-norm and the $L_t^4 L_x^\infty$-norm, using the identity
\[
 i\int_{I_2} e^{i(t-s)\Delta}|x|^{-b}|u|^2 u(s)\,ds = e^{i(t-T+\eps^{-c})\Delta}[u(T-\eps^{-c})-u_0]
\]
and Strichartz to obtain boundedness for the $L_t^4 L_x^3$-norm.  For the $L_t^4 L_x^\infty$-norm, we first use the dispersive estimate, Hardy's inequality, and Sobolev embedding to estimate
\begin{align*}
\biggl\| \int_{I_2} e^{i(t-s)\Delta}|x|^{-b}|u|^2 u(s)\,ds\biggr\|_{L_x^\infty} & \lesssim \int_{I_2} |t-s|^{-\frac32}\,ds\cdot  \| |x|^{-\frac{b}{3}}u \|_{L_t^\infty L_x^3}^3 \\
& \lesssim (t-T+\eps^{-c})^{-\frac12}\| |\nabla|^{\frac{b}{3}+\frac12}u\|_{L_t^\infty L_x^2}^3.
\end{align*} 
Thus the $L_t^4 L_x^\infty$-norm over $[T,\infty)$ is bounded by $\eps^{\frac{c}{4}},$ and hence we deduce the acceptable estimate
\[
\biggl\|\int_{I_2}  e^{i(t-s)\Delta}|x|^{-b}|u|^2 u(s)\,ds\biggr\|_{L_t^4 L_x^{\frac{6}{1-b}}([T,\infty)\times\R^3)}  \lesssim \eps^{\frac{(1+b)c}{8}}.
\]
\end{proof}

\section{Virial/Morawetz estimate}\label{S:virial}

In this section, we let $u$ be a solution to \eqref{nls} satisfying the hypotheses of Theorem~\ref{T}.  In particular, as discussed in Section~\ref{S:prelim}, $u$ is global, uniformly bounded in $H^1$, and obeys \eqref{stay-below}.  We will prove a virial/Morawetz estimate that implies the mass evacuation condition appearing in Proposition~\ref{P:scatter}.  

\begin{proposition}[Virial/Morawetz estimate]\label{P:morawetz} For any $T>0$ and $R>0$ sufficiently large,
\[
\tfrac{1}{T}\int_0^T \int_{|x|\leq R}|x|^{-b}|u(t,x)|^4\,dx\,dt \lesssim \tfrac{R}{T}+ \tfrac{1}{R^b}.
\]
\end{proposition}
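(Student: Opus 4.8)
The plan is to run the standard Morawetz/virial machinery with a weight $a(x)$ that interpolates between the exact virial weight $|x|^2$ (which gives the clean identity \eqref{pure-virial} and hence the coercive bilinear form of Lemma~\ref{L:coercive}) at small $|x|$ and a bounded, radially-increasing Morawetz-type weight (e.g. $a(x)\sim R|x|$ or $a$ with $a'(r)\lesssim 1$) for $|x|\gtrsim R$, so that the full truncated functional $M_R(t)=2\,\Im\int \bar u\,\nabla u\cdot\nabla a\,dx$ is \emph{bounded} uniformly in $t$ by $\lesssim R\,\|u\|_{L_t^\infty H^1}^2\lesssim R$. Differentiating in time and using \eqref{nls} and integration by parts yields
\[
\tfrac{d}{dt} M_R(t) = \int \bigl[ 4\,\partial_j\partial_k a\,\Re(\partial_j \bar u\,\partial_k u) - \Delta^2 a\,|u|^2\bigr]\,dx + \int \bigl[ \tfrac{3+b}{3+b}\cdots\bigr]
\]
— more precisely a kinetic piece, a $|u|^2$ error from $\Delta^2 a$, and a nonlinear piece of the form $-\int |x|^{-b}|u|^4\,(\text{combination of } \Delta a \text{ and } x\cdot\nabla(|x|^{-b})/|x|^{-b})$ together with an extra nonlinear term supported where $\nabla a \neq x$. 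In the region $|x|\le R$ where $a(x)=|x|^2$, these combine to reproduce exactly $8\int_{|x|\le R}|\nabla u|^2 - \tfrac{3+b}{4}|x|^{-b}|u|^4\,dx$ (up to cutoff commutators), which by Lemma~\ref{L:coercive} applied to $\chi_R u$ is bounded below by $\delta'\int_{|x|\le R}|x|^{-b}|u|^4\,dx$ — this is the term we want on the left.

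The key steps, in order, are: (1) fix the weight $a$ and record $|M_R(t)|\lesssim R$ uniformly in $t$; (2) compute $\tfrac{d}{dt}M_R(t)$, organizing it as (coercive bulk term on $|x|\le R$) $+$ (error terms); (3) handle the error terms, which come in two flavors — linear errors $\int \Delta^2 a\,|u|^2$, supported on $|x|\sim R$ where $|\Delta^2 a|\lesssim R^{-2}$, contributing $\lesssim R^{-2}M(u)\lesssim R^{-2}\lesssim R^{-b}$; and nonlinear errors from the transition region $|x|\gtrsim R$, where the extra factor $|x|^{-b}\le R^{-b}$ from the inhomogeneity gives $\int_{|x|\gtrsim R}|x|^{-b}|u|^4\,dx\lesssim R^{-b}\|u\|_{L^4}^4\lesssim R^{-b}$ by the uniform $H^1$ bound and Sobolev; (4) also treat the cutoff commutator terms from replacing $u$ by $\chi_R u$ in the coercivity step (terms with $\nabla\chi_R$ or $\Delta\chi_R$), which are again $\mathcal{O}(R^{-2}M(u))$ or absorbed into the $R^{-b}$ nonlinear errors; (5) rearrange to get $\delta'\int_{|x|\le R}|x|^{-b}|u|^4\,dx \lesssim \tfrac{d}{dt}M_R(t) + R^{-b}$, integrate over $[0,T]$, use $|M_R(T)-M_R(0)|\lesssim R$, and divide by $T$.

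The main obstacle I expect is step (3)–(4): cleanly extracting the coercive form on $|x|\le R$ while showing that \emph{all} of the many error terms generated by the transition region of $a$ and by the cutoff $\chi_R$ are dominated either by $R^{-2}$ (harmless, smaller than $R^{-b}$ since $b<\tfrac12<2$) or by $R^{-b}$. The point that makes this work without a radial assumption — and the reason the inhomogeneous problem is genuinely easier here — is precisely that the nonlinear error terms live where $|x|\gtrsim R$ and there the weight $|x|^{-b}\lesssim R^{-b}$ supplies the needed quantitative spatial decay for free, replacing the radial Sobolev (Strauss) estimate used in \cite{DM}. A minor additional care point is that $\nabla^2 a$ should be arranged to remain nonnegative as a matrix (so the kinetic term has a favorable sign and can simply be discarded), which constrains the choice of $a$ in the transition region but is standard.
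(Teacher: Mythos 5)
Your proposal is correct and follows essentially the same route as the paper: the same interpolated virial/Morawetz weight with $|A_a(t)|\lesssim R$, the same extraction of the coercive form on the bulk region via the cutoff identity and Lemma~\ref{L:coercive}, and the same observation that the $|x|^{-b}$ factor itself supplies the $R^{-b}$ decay for the exterior and transition-region nonlinear errors (replacing radial Sobolev), followed by the fundamental theorem of calculus and division by $T$. The only cosmetic differences are the garbled intermediate display and the paper's specific choice of transition region $\tfrac{R}{2}<|x|\leq R$ with explicit bounds $|\partial^\alpha a|\lesssim R|x|^{-|\alpha|+1}$ guaranteeing nonnegativity of $a_{jk}$, which you flag but do not write out.
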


\begin{proof} The proof is based off of the following identity, which follows from a direct computation using \eqref{nls} and integration by parts:  Given a smooth weight $a:\R^3\to\R$ and defining
\[
A_a(t) = 2\Im \int \bar u  u_j a_j \,dx, 
\]
where subscripts denote partial derivatives and repeated indices are summed, we have
\[
\tfrac{d}{dt}A_a = \int  4\Re a_{jk}\bar u_j u_k - |u|^2a_{jjkk}-|x|^{-b}|u|^4 a_{jj}-b|x|^{-b-2}|u|^4x_j a_j\,dx.
\]

Inspired by \cite{TO}, we choose a weight that interpolates between the standard virial and Morawetz weights.  In particular, choosing $R$ sufficiently large as in Lemma~\ref{L:coercive}, we let $a$ be a radial function satisfying
\[
a(x) = |x|^2\qtq{for} |x|\leq \tfrac{R}{2}\qtq{and} a(x)=2R|x|\qtq{for}|x|>R.
\]
For $\tfrac{R}{2}<|x|\leq R$ we impose 
\[
\partial_r a \geq 0,\quad \partial_r^2 a\geq 0,\qtq{and}|\partial^\alpha a(x)|\lesssim_\alpha R|x|^{-|\alpha|+1}\qtq{for}|\alpha|\geq 1, 
\]
where $\partial_r$ denotes radial derivative. We observe that the conditions above imply nonnegativity of the matrix $a_{jk}$, and that we have the bound
\[
\sup_{t\in \R}|A_a(t)| \lesssim R\|u\|_{L_t^\infty H_x^1}^2 \lesssim R. 
\]

For $|x|\leq \tfrac{R}{2}$, we have
\[
a_j = 2x_j,\quad a_{jk} = 2\delta_{jk},\quad \Delta a = 6,\qtq{and} \Delta\Delta a = 0,
\]
while for $|x|>R$ we have
\[
a_j =\tfrac{2Rx_j}{|x|},\quad  a_{jk}=\tfrac{2R}{|x|}[\delta_{jk}-\tfrac{x_jx_k}{|x|^2}],\quad \Delta a = \tfrac{4R}{|x|},\quad \Delta\Delta a = 0. 
\] 
Thus, by the identities above,
\begin{align}
\tfrac{d}{dt} A_a & =  8\int_{|x|\leq \frac{R}{2}} |\nabla u|^2 - \tfrac{3+b}{4}|x|^{-b}|u|^4\,dx \label{Mor1} \\
& \quad + \int_{|x|>R} \tfrac{8R}{|x|}|\slashed{\nabla}u|^2 -\tfrac{2R(2+b)}{|x|}|x|^{-b}|u|^4 \,dx \label{Mor2}  \\
& \quad + \int_{\frac{R}{2}<|x|\leq R} 4\Re a_{jk}\bar u_j u_k + \mathcal{O}(R^{-b}|u|^4 + R^{-2}|u|^2)\,dx \label{Mor3},
\end{align} 
where $\slashed{\nabla}$ denotes the angular part of the derivative.

In \eqref{Mor1}, we insert $\chi_R^2$ and use the identity \eqref{benstrick}, Lemma~\ref{L:coercive}, and uniform $H^1$-boundedness of $u$ to obtain
\begin{align*}
\eqref{Mor1} & \geq 8\int |\nabla[\chi_R u]|^2 - \tfrac{3+b}{4}|x|^{-b}|\chi_R u|^4\,dx \\
& \quad - \mathcal{O}\biggl\{R^{-2}M(u)+\int[\chi_R^4 - \chi_R^2]|x|^{-b}|u|^4\,dx \biggr\}\\
& \geq \delta'\int |x|^{-b}|\chi_Ru|^4 \,dx - \mathcal{O}(R^{-b}).
\end{align*}

For \eqref{Mor2}, the angular derivative term is nonnegative, while the nonlinear term is estimated by $R^{-b}$. Similarly, in \eqref{Mor3} the first term is nonnegative while the second term is estimated by $R^{-b}$.  Note that in contrast to \cite{DM}, we do not use radial Sobolev embedding to obtain decay at large radii.  Instead, the decay comes directly from the nonlinearity. 

Applying the fundamental theorem of calculus on the interval $[0,T]$ now yields
\[
\int_0^T \int_{|x|\leq \frac{R}{2}} |x|^{-b} |u(t,x)|^4\,dx\,dt \lesssim R + TR^{-b}. 
\]
\end{proof}

\begin{proof}[Proof of Theorem~\ref{T}] Applying Proposition~\ref{P:morawetz} with $R\sim T^{\frac{1}{b+1}}$ and $T$ sufficiently large, we may find a sequence of times $t_n\to\infty$ and radii $R_n\to\infty$ such that 
\[
\lim_{n\to\infty} \int_{|x|\leq R_n} |x|^{-b}|u(t_n,x)|^4\,dx = 0. 
\]
Thus, given any $R>0$, we have by H\"older's inequality that
\[
\int_{|x|\leq R}|u(t_n,x)|^2\,dx \lesssim R^{\frac{3+b}{2}}\biggl(\int_{|x|\leq R}|x|^{-b}|u(t_n,x)|^4\,dx\biggr)^{\frac12} \to 0 \qtq{as}n\to\infty.
\]
We therefore derive scattering via Proposition~\ref{P:scatter}. \end{proof}


\begin{thebibliography}{100}
\bibitem{ADM} A. Arora, B. Dodson, and J. Murphy, \emph{Scattering below the ground state for the 2d radial nonlinear Schr\"odinger equation.} Proc. Amer. Math. Soc. \textbf{148} (2020), no. 4, 1653--1663.

\bibitem{CFGM} M. Cardoso, L. G. Farah, C. M. Guzm\'an, and J. Murphy, \emph{Scattering below the ground state for the intercritical non-radial inhomogeneous NLS.} Preprint {\tt arXiv:2007.06165}.

\bibitem{Cazenave}  T. Cazenave, \emph{Semilinear Schr\"odinger equations}, Courant Lecture Notes in Mathematics, vol. 10, New York University, Courant Institute of Mathematical Sciences, New York; American Mathematical Society, Providence, RI, 2003.

\bibitem{DM} B. Dodson and J. Murphy, \emph{A new proof of scattering below the ground state for the 3d radial focusing cubic NLS.} Proc. Amer. Math. Soc. \textbf{145} (2017), no. 11, 4859--4867.

\bibitem{Farah} L. G. Farah, \emph{ Global well-posedness and blow-up on the energy space for the inhomogeneous nonlinear Schr\"odinger equation.}  J. Evol. Equ. \textbf{16} (2016), no. 1, 193--208.

\bibitem{FG} L. G. Farah and C. M. Guzm\'an, \emph{Scattering for the radial 3D cubic focusing inhomogeneous nonlinear Schr\"odinger equation.} J. Differential Equations \textbf{262} (2017), no. 8, 4175--4231

\bibitem{Guzman} C. M. Guzm\'an, \emph{On well posedness for the inhomogeneous nonlinear Schr\"odinger equation}.  Nonlinear Anal. Real World Appl. \textbf{37} (2017), 249--286.

\bibitem{KM} C. E. Kenig and F. Merle, \emph{Global well-posedness, scattering and lbow-up for the energy-critical, focusing, non-linear Schr\"odinger equation in the radial case.} Invent. Math, \textbf{166} (2006), no. 3, 645--675. 

\bibitem{MMZ} C. Miao, J. Murphy, and J. Zheng, \emph{Scattering for the non-radial inhomogeneous NLS.} Preprint {\tt arXiv:1912.01318.}  To appear in Math. Res. Lett. 

\bibitem{TO} T. Ogawa and Y. Tsutsumi, \emph{Blow-up of $H^1$ solution for the nonlinear Schr\"odinger equation.} J. Differential Equations \textbf{92} (1991), no. 2, 317--330.

\bibitem{Strauss} W. A. Strauss, \emph{Existence of solitary waves in higher dimensions.} Comm. Math. Phys. \textbf{55} (1977), no. 2, 149--162.

\bibitem{Tao} T. Tao, \emph{On the asymptotic behavior of large radial data for a focusing non- linear Schr\"odinger equation}. Dyn. Partial Differ. Equ. \textbf{1} (2004), no. 1, 1--48.


\bibitem{XZ} C. Xu and T. Zhao, \emph{A remark on the scattering theory for the 2d radial focusing INLS}. Preprint {\tt arXiv:1908.00743.}
\end{thebibliography}
\end{document}